\newcommand{\ud}[0]{\,\mathrm{d}}
\newcommand{\abs}[1]{|#1|}
\newcommand{\Norm}[2]{\|#1\|_{#2}}
\newcommand{\ave}[1]{\langle #1\rangle}
\newcommand{\BMO}[0]{\operatorname{BMO}}
\renewcommand{\Re}[0]{\operatorname{Re}}
\newcommand{\R}{\mathbb{R}}
\newcommand{\C}{\mathbb{C}}
\newcommand{\eps}[0]{\varepsilon}
\swapnumbers \numberwithin{equation}{section}
\theoremstyle{plain}
\newtheorem{theorem}[equation]{Theorem}
\newtheorem{lemma}[equation]{Lemma}
\theoremstyle{definition}
\theoremstyle{remark}
\begin{document}

\title[Two-weight commutators]{The Holmes--Wick theorem on two-weight bounds for higher order commutators revisited}

\author{T.~P.\ Hyt\"onen}
\address{Department of Mathematics and Statistics, P.O.B.~68 (Gustaf H\"all\-str\"omin katu~2b), FI-00014 University of Helsinki, Finland}
\email{tuomas.hytonen@helsinki.fi}


\thanks{The author is supported by the ERC Starting Grant ``AnProb''. He is a member of the Finnish Centre of Excellence in Analysis and Dynamics Research.}
\keywords{Commutator, two-weight inequality, $A_p$ weight, bounded mean oscillation}
\subjclass[2010]{42B20, 42B35}


\maketitle

\begin{abstract}
A sufficient condition for the two-weight boundedness of higher order commutators was recently obtained by Holmes and Wick in terms of an intersection of two BMO spaces. We provide an alternative proof, showing that the higher order case can be deduced by a classical Cauchy integral argument from the corresponding first order result of Holmes, Lacey and Wick.
\end{abstract}

\section{Introduction}

Let $T$ be a Calder\'on--Zygmund operator on $\R^n$, and $M_b$ be the pointwise multiplication operator $M_b:f\mapsto bf$ by a function $b$. A connection between the $L^p(\R^n)$-boundedness of the commutators
\begin{equation*}
  [b,T]:=M_b\circ T-T\circ M_b
\end{equation*}
and the bounded mean oscillation norm
\begin{equation*}
  \Norm{b}{BMO(\R^n)}
  :=\sup_Q\fint_Q\abs{b-\ave{b}_Q}\ud x
\end{equation*}
has been known since the seminal work of Coifman, Rochberg and Weiss \cite{CRW}. Here and below, $\sup_Q$ stands for the supremum over all cubes $Q\subset\R^n$, and
\begin{equation*}
   \ave{b}_Q:=\fint_Q b:=\frac{1}{\abs{Q}}\int_Q b\ud x
\end{equation*}
is the average of the function $b$ over the cube $Q$. It was proved in \cite{CRW} that the commutator $[b,T]$ is bounded on $L^p(\R^n)$ for $p\in(1,\infty)$ whenever $b\in BMO(\R^n)$ and $T$ is a Calder\'on--Zygmund operator. Moreover, when $T$ is a special Calder\'on--Zygmund operator like the Hilbert transform $H$ for $n=1$, or the vector $\vec{R}$ of the Riesz transforms for $n> 1$, this becomes ``if and only if'', showing that $BMO(\R^n)$ is precisely the correct function space for such commutator estimates.

It is remarkable that a similar characterisation is available in a much more general situation. Namely, consider two weight functions $\lambda,\mu$ in the Muckenhoupt class $A_p$, defined by the finiteness of the respective $A_p$ constants $[\lambda]_{A_p}$ and $[\mu]_{A_p}$, where
\begin{equation*}
  [w]_{A_p}:=\sup_Q\Big(\fint_Q w\Big)\Big(\fint_Q w^{1-p'}\Big)^{p-1},\qquad p\in(1,\infty).
\end{equation*}
Then it was shown by Bloom \cite{Bloom:85} that there is a bounded action of $[b,H]:L^p(\mu)\to L^p(\lambda)$ if and only if the weighted BMO norm
\begin{equation*}
  \Norm{b}{BMO(\nu)}:=\sup_Q\frac{1}{\nu(Q)}\int_Q\abs{b-\ave{b}_Q}\ud x
\end{equation*}
is finite, where the weight function $\nu:=(\mu/\lambda)^{1/p}$ is identified with the measure $\nu(E):=\int_E\nu\ud x$. Note that the norm of $BMO(\nu)$ still involves the unweighted average $\ave{b}_Q$ and an integral $\int_Q\abs{b-\ave{b}_Q}\ud x$ with respect to the Lebesgue measure, and the weight $\nu$ only makes an appearance in the normalisation by $1/\nu(Q)$.

Recently, Bloom's theorem was revisited by Holmes, Lacey and Wick, who gave a new proof of the original result \cite{HolLW:Bloom} and an extension to higher dimensions and general Calder\'on--Zygmund operators \cite{HolLW}. More precisely, they showed that the membership of $b$ in $BMO(\nu)$ still characterises the boundedness of $[b,T]:L^p(\mu)\to L^p(\lambda)$ when $T=\vec{R}$ is the vector of the Riesz transforms, and provides a sufficient condition for this boundedness for an arbitrary Calder\'on--Zygmund operator. We record the latter result in a quantitative form which, although not stated as such in \cite{HolLW}, follows readily by inspection of the same argument. (In fact, practically all known applications of the $A_p$ condition depend on upper bounds rather than exact values of the $A_p$ constants.)

\begin{theorem}[\cite{HolLW}]\label{thm:HolLW}
Let $T$ be a Calder\'on--Zygmund operator on $\R^n$, and $p\in(1,\infty)$. For any two weights $\lambda,\mu\in A_p$ and a function $b\in BMO(\nu)$, where $\nu=(\mu/\lambda)^{1/p}$, there holds
\begin{equation*}
  \Norm{[b,T]}{L^p(\mu)\to L^p(\lambda)}\leq C_{n,p,T}([\mu]_{A_p},[\lambda]_{A_p})\Norm{b}{BMO(\nu)},
\end{equation*}
where $C_{n,p,T}(\cdot,\cdot)$ is monotone increasing in both $A_p$ constants.
\end{theorem}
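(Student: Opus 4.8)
Since the asserted inequality is, up to the explicit shape of the constant, the main theorem of \cite{HolLW}, the plan is to revisit their proof and verify that every weighted estimate entering it depends on the weights only through quantities that are monotone in $[\mu]_{A_p}$ and $[\lambda]_{A_p}$. Thus the only real issue is bookkeeping; I first recall the structure of the underlying argument.

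The first step is the dyadic representation theorem: up to a constant depending only on $n$ and $T$, the operator $T$ is an average over dyadic systems $\mathcal D$ of sums $\sum_{i,j\ge0}2^{-\max(i,j)\delta/2}S^{i,j}_{\mathcal D}$, where $\delta$ is the Hölder exponent of the kernel and each $S^{i,j}_{\mathcal D}$ is a Haar shift of complexity $(i,j)$ (together with the dyadic paraproducts attached to $T1$ and $T^*1$, treated in the same way). Since $b\mapsto[b,\cdot\,]$ is linear and commutes with the averaging over $\mathcal D$, it suffices to prove
\[
  \Norm{[b,S^{i,j}_{\mathcal D}]}{L^p(\mu)\to L^p(\lambda)}\le C_{n,p}\big([\mu]_{A_p},[\lambda]_{A_p}\big)\,(1+\max(i,j))^{\kappa}\,\Norm{b}{BMO(\nu)}
\]
with $\kappa=\kappa(n)$ fixed and $C_{n,p}$ independent of $i,j$; the factor $2^{-\max(i,j)\delta/2}$ then dominates the polynomial loss, permits summation over $i,j$, and the average over $\mathcal D$ preserves the bound.

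The second step treats a fixed shift $S=S^{i,j}_{\mathcal D}$: expand $b$ in the Haar basis of $\mathcal D$ and commute term by term. This writes $[b,S]$ as a sum of boundedly many (in terms of the complexity) operators, the generic one being a paraproduct $f\mapsto\sum_Q c_Q(b)\,\ave{f}_{\widehat Q}\,h_Q$, with $c_Q(b)$ a Haar coefficient of $b$ on a cube near $Q$ and $\widehat Q$ a fixed ancestor of $Q$, plus its transpose; the remaining terms have $b$ entering only through oscillations $b-\ave{b}_Q$ at scales comparable to the output scale, and are dominated pointwise by $f\mapsto\sum_Q\big(\fint_Q\abs{b-\ave{b}_Q}\big)\ave{\abs f}_Q\mathbf 1_Q$. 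Alternatively one may bypass this expansion altogether by invoking the sparse bound $\abs{[b,T]f}\lesssim\sum_{Q\in\mathcal S}\big(\fint_Q\abs{b-\ave{b}_Q}\,\abs f\big)\mathbf 1_Q+(\text{transpose})$ for a sparse family $\mathcal S$, which reduces everything to a single positive dyadic form.

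The third step, which is the crux and the point where I expect the genuine work to lie, is the two-weight estimate for these model forms, e.g.
\[
  \BNorm{\sum_{Q\in\mathcal S}\Big(\fint_Q\abs{b-\ave{b}_Q}\,\abs f\Big)\mathbf 1_Q}{L^p(\lambda)}\le C\big([\mu]_{A_p},[\lambda]_{A_p}\big)\,\Norm{b}{BMO(\nu)}\,\Norm{f}{L^p(\mu)}
\]
together with the corresponding $\ell^2$/paraproduct version. The mechanism is: (i) the Bloom condition self-improves, by a John--Nirenberg argument, so that $\fint_Q\abs{b-\ave{b}_Q}\psi$ is controlled by $\ave{\nu}_Q\Norm{b}{BMO(\nu)}$ times a suitable local norm of $\psi$; (ii) from $\nu=(\mu/\lambda)^{1/p}$ and $\mu,\lambda\in A_p$ one gets $\nu\in A_2$ with $[\nu]_{A_2}\le([\mu]_{A_p}[\lambda]_{A_p})^{1/p}$, hence a reverse Hölder inequality for $\nu$ whose exponent and constant are controlled by $[\mu]_{A_p},[\lambda]_{A_p}$; and (iii) a weighted Carleson embedding theorem for the measures $\mu$ and $\lambda^{1-p'}$, with the standard $A_p$--$A_\infty$ constants. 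Chaining (i)--(iii) with Hölder's inequality on each $Q$ (splitting $1=\mu^{1/p}\cdot\mu^{-1/p}$ and noting $\mu^{-1/p}\lambda^{1/p}=\nu^{-1}$) closes the estimate. Every weighted ingredient above depends on the weights only through $[\mu]_{A_p}$ and $[\lambda]_{A_p}$ and is monotone increasing in each, so propagating this through the finitely many pieces of the second step and the summation of the first yields the asserted form of $C_{n,p,T}(\cdot,\cdot)$. This last bookkeeping is routine, and is exactly the point at which the quantitative statement departs from the qualitative one in \cite{HolLW}.
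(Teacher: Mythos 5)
The paper does not prove Theorem~\ref{thm:HolLW}. It is stated as a citation of Holmes--Lacey--Wick~\cite{HolLW}, with only the parenthetical remark that the quantitative, monotone-in-the-$A_p$-constants form ``follows readily by inspection of the same argument.'' The entire design of the paper is to take this first-order result as a \emph{black box} and derive the higher-order Theorem~\ref{thm:HolWick} from it via the Cauchy integral / conjugation-by-$e^{bz}$ argument together with Lemma~\ref{lem:ebw}; a structural re-proof of the first-order commutator bound is deliberately avoided, and indeed the author emphasises that no definition of a Calder\'on--Zygmund operator is ever needed beyond the conclusions of Theorem~\ref{thm:HolLW}.

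Your proposal instead attempts to carry out the ``inspection'' that the paper alludes to but does not perform: dyadic representation, reduction to shifts (or, alternatively, to a sparse form), and the three weighted ingredients --- self-improvement of the Bloom condition via John--Nirenberg, the estimate $[\nu]_{A_2}\le([\mu]_{A_p}[\lambda]_{A_p})^{1/p}$ (which you state correctly), and weighted Carleson embedding --- with the check that each constant is monotone in $[\mu]_{A_p}$, $[\lambda]_{A_p}$. As an outline this is a faithful reconstruction of the shape of the \cite{HolLW} argument and of the bookkeeping the paper hints at, so it is not wrong, but it takes a genuinely different stance: you are re-proving the cited input, while the paper's contribution is precisely that one need not. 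Two small remarks: the sparse domination route you mention in passing postdates \cite{HolLW} and is not what they used, though it is a legitimate alternative; and your step-three chain is, appropriately for this context, a sketch rather than a complete argument --- which matches the level of detail the paper itself assigns to this statement, since its ``proof'' is the reference.
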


With Theorem \ref{thm:HolLW} at hand, the next natural object of study consist of the \emph{higher order commutators}
\begin{equation*}
  C^k_b(T):=[b,C^{k-1}_b(T)],\quad C^0_b(T):=T.
\end{equation*}
For this class of operators, a sufficient condition for the two-weight boundedness was provided by Holmes and Wick \cite{HolWick} in terms of the intersection of the classical and weighted BMO spaces:

\begin{theorem}[\cite{HolWick}]\label{thm:HolWick}
Let $T$ be a Calder\'on--Zygmund operator on $\R^n$, $p\in(1,\infty)$, and $k>1$. For any two weights $\lambda,\mu\in A_p$ and a function $b\in BMO\cap BMO(\nu)$, where $\nu=(\mu/\lambda)^{1/p}$, there holds
\begin{equation*}
  \Norm{C^k_b(T)}{L^p(\mu)\to L^p(\lambda)}\leq C_{n,p,k,T}([\mu]_{A_p},[\lambda]_{A_p})\Norm{b}{BMO}^{k-1}\Norm{b}{BMO(\nu)},
\end{equation*}
where $C_{n,p,k,T}(\cdot,\cdot)$ is monotone increasing in both arguments.
\end{theorem}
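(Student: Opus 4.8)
The plan is to derive the $k$-th order bound from the first-order result, Theorem~\ref{thm:HolLW}, by a Coifman--Rochberg--Weiss type Cauchy integral in a conjugation parameter. For $z\in\C$ write $T_z:=e^{zb}Te^{-zb}$; the usual expansion $T_z=\sum_{j\geq0}\frac{z^j}{j!}C^j_b(T)$, together with the fact that $b$ commutes with $e^{zb}$, gives
\begin{equation*}
  [b,T_z]=\sum_{m\geq1}\frac{z^{m-1}}{(m-1)!}C^m_b(T)=e^{zb}[b,T]e^{-zb}.
\end{equation*}
I would first prove the theorem under the extra hypothesis $b\in L^\infty(\R^n)$ (retaining $b\in BMO(\nu)$), so that multiplication by $e^{\pm zb}$ is bounded on every space involved and $z\mapsto T_z$ is entire into $\bddlin(L^p(\R^n))$. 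Then $C^k_b(T)$ equals $(k-1)!$ times the coefficient of $z^{k-1}$ in the series above, and Cauchy's formula yields, for every $r>0$ and every $f$ in a convenient dense class,
\begin{equation*}
  \Norm{C^k_b(T)f}{L^p(\lambda)}\leq\frac{(k-1)!}{r^{k-1}}\sup_{\abs{z}=r}\Norm{e^{zb}[b,T]e^{-zb}f}{L^p(\lambda)}.
\end{equation*}

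Next I would undo the conjugation. Since $\abs{e^{zb}}^p=e^{p\Re(zb)}$, setting $g:=e^{-zb}f$ and, for a weight $w$, $w_z:=e^{p\Re(zb)}w$, one gets $\Norm{e^{zb}[b,T]e^{-zb}f}{L^p(\lambda)}=\Norm{[b,T]g}{L^p(\lambda_z)}$ and $\Norm{f}{L^p(\mu)}=\Norm{g}{L^p(\mu_z)}$. The entire strategy hinges on the elementary but decisive fact that $(\mu_z/\lambda_z)^{1/p}=(\mu/\lambda)^{1/p}=\nu$ does not depend on $z$: applying Theorem~\ref{thm:HolLW} to $[b,T]\colon L^p(\mu_z)\to L^p(\lambda_z)$ therefore always produces the same factor $\Norm{b}{BMO(\nu)}$, with a constant $C_{n,p,T}([\mu_z]_{A_p},[\lambda_z]_{A_p})$ depending only on the two perturbed $A_p$ constants.

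The technical core is then a quantitative stability property of the $A_p$ class: if $w\in A_p$ and $\phi$ is real-valued with $\Norm{\phi}{BMO}$ below a threshold $c([w]_{A_p},n,p)$, then $[e^{\phi}w]_{A_p}\leq C([w]_{A_p},n,p)$. I would prove this by fixing a cube $Q$ and combining the reverse Hölder inequality for $w$ and for the dual weight $\sigma:=w^{1-p'}$ with the John--Nirenberg inequality for $\phi$ to get $\fint_Q e^{\phi}w\lesssim e^{\ave{\phi}_Q}\fint_Q w$ and $\fint_Q e^{-(p'-1)\phi}\sigma\lesssim e^{-(p'-1)\ave{\phi}_Q}\fint_Q\sigma$; since $(e^{\phi}w)^{1-p'}=e^{-(p'-1)\phi}\sigma$ and $(p-1)(p'-1)=1$, the factors $e^{\pm\ave{\phi}_Q}$ cancel exactly in the $A_p$ quotient, leaving a multiple of $[w]_{A_p}$. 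Applying this with $\phi=p\Re(zb)$ (whose BMO norm is $\lesssim p\abs{z}\Norm{b}{BMO}$) to both $\mu$ and $\lambda$ shows that one may fix a radius $r\simeq c([\mu]_{A_p},[\lambda]_{A_p},n,p)/\Norm{b}{BMO}$ for which $[\mu_z]_{A_p}$ and $[\lambda_z]_{A_p}$ stay bounded in terms of $[\mu]_{A_p},[\lambda]_{A_p},n,p$ throughout $\abs{z}=r$.

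Combining the Cauchy estimate, the change of variable, and Theorem~\ref{thm:HolLW} with this choice of $r$ then yields the asserted inequality: the factor $r^{-(k-1)}$ supplies the power $\Norm{b}{BMO}^{k-1}$, Theorem~\ref{thm:HolLW} supplies the single $\Norm{b}{BMO(\nu)}$, and the monotonicity of the resulting constant is inherited from the monotonicity in Theorem~\ref{thm:HolLW} and in the stability property. Finally, I would remove the auxiliary hypothesis $b\in L^\infty$ by applying the estimate to the truncations $b_N$ of $b$ --- which satisfy $\Norm{b_N}{BMO}\lesssim\Norm{b}{BMO}$ and $\Norm{b_N}{BMO(\nu)}\lesssim\Norm{b}{BMO(\nu)}$ --- and passing to the limit $N\to\infty$ by Fatou's lemma. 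I expect the $A_p$-stability step to be the only real obstacle, since it is there that one has to make both the admissible threshold on $\Norm{\phi}{BMO}$ and the resulting control of $[e^{\phi}w]_{A_p}$ genuinely quantitative and monotone in $[w]_{A_p}$; everything else is the algebra of the Cauchy kernel plus a routine approximation argument legitimising the operator-valued contour integral.
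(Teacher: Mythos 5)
Your proof is correct and takes essentially the same route as the paper: a Coifman--Rochberg--Weiss contour integral applied to the conjugated first-order commutator $e^{zb}[b,T]e^{-zb}$, a change of weight to $e^{p\Re(zb)}\mu$ and $e^{p\Re(zb)}\lambda$ (where your exponent is actually the correct one; the paper has a harmless typo writing $e^{\Re(bz)/p}$), the crucial observation that $\nu$ is invariant under this conjugation, and the quantitative $A_p$-stability lemma proved via reverse H\"older and John--Nirenberg. The only difference is your explicit $b\in L^\infty$ reduction plus a Fatou limiting argument to justify the operator-valued contour integral, a rigor point the paper leaves implicit.
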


Both Theorems \ref{thm:HolLW} and \ref{thm:HolWick} were proved by modern methods of dyadic analysis, using the dyadic representation theorem from \cite{Hytonen:A2} to expand the Calder\'on--Zygmund operator $T$ in terms of simpler object called \emph{dyadic shifts} $S_{m,n}$, and exploiting their explicit structure to analyse each $C_b^k(S_{m,n})$. The goal of this paper is to provide an alternative approach to the higher order Theorem \ref{thm:HolWick}, based on a black-box application of the first order Theorem \ref{thm:HolLW}, combined with a Cauchy integral argument that goes back to the classical paper of Coifman, Rochberg and Weiss \cite{CRW}. This approach shows in particular that essentially all that we need to know about the operator $T$ to prove Theorem \ref{thm:HolWick} is encoded in the conclusions of Theorem \ref{thm:HolLW}; the deeper structural analysis of $T$ is only needed to establish this first order result. The careful reader will have noticed that we never gave a definition of a ``Calder\'on--Zygmund operator''; indeed, all we need to know is that it is a linear operator that satisfies the conclusions of Theorem \ref{thm:HolLW}!

\section{Preliminaries on weights}

Besides the $A_p$ constant defined above for $p\in(1,\infty)$, we shall need the Fujii--Wilson $A_\infty$ constant
\begin{equation*}
  [w]_{A_\infty}:=\sup_Q\frac{1}{w(Q)}\int_Q M(1_Q w),
\end{equation*}
where $M$ is the Hardy--Littlewood maximal operator. We shall quote several results from \cite{HytPer}, where the same quantity is denoted by $[w]_{A_\infty}'$ instead. It satisfies $[w]_{A_\infty}\leq c_n[w]_{A_p}$ for all $p\in(1,\infty)$, see \cite[bottom of p. 778]{HytPer}.

When $p\in(1,\infty)$ is fixed, we denote by $\sigma:=w^{1-p'}$ the dual weight, which satisfies $[\sigma]_{A_{p'}}=[w]_{A_p}^{p'-1}$ by simple algebra.
It is useful to define the quantity
\begin{equation*}
  (w)_{A_p}:=\max([w]_{A_\infty},[\sigma]_{A_\infty}),
\end{equation*}
which satisfies
\begin{equation*}
  (w)_{A_p}
  \leq c_n\max([w]_{A_p},[\sigma]_{A_{p'}})
  = c_n\max([w]_{A_p},[w]_{A_{p}}^{p'-1})
  =c_n[w]_{A_p}^{\max(1,p'-1)}
\end{equation*}
by chaining the observations above.

We shall need the following relation of $A_p$ weights and the BMO space. This is certainly implicit in the literature and known to experts, but not easily citable in the stated form, so it included for completeness. The case $p=2$ can be found in \cite[Lemma 7.3]{HytPer}, and the argument here follows the same pattern.

\begin{lemma}\label{lem:ebw}
Let $p\in(1,\infty)$, $w\in A_p$, and $b\in BMO$ on $\R^n$. There are constants $\eps_{n,p},c_{n,p}>0$ depending only on the indicated parameters, such that
\begin{equation*}
  [e^{\Re(bz)}w]_{A_p}\leq c_{n,p}[w]_{A_p}
\end{equation*}
for all $z\in\C$ with
\begin{equation*}
  \abs{z}\leq\frac{\eps_{n,p}}{\Norm{b}{BMO}(w)_{A_p}}.
\end{equation*}
\end{lemma}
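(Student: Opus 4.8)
The plan is to reduce the claimed bound on $[e^{\Re(bz)}w]_{A_p}$ to two ingredients: the John--Nirenberg inequality in a self-improving, quantitative form, and the elementary observation that the $A_p$ constant is controlled by a product of ``reverse H\"older'' type oscillation bounds for $w$ and its dual weight $\sigma = w^{1-p'}$. Writing $z = re^{\img\theta}$ and $b_\theta := \Re(e^{\img\theta}b)$, we note that $b_\theta$ is real-valued with $\Norm{b_\theta}{BMO}\leq\Norm{b}{BMO}$, so it suffices to prove the statement for a real-valued $b$ and real $z = r > 0$ with $r\leq\eps_{n,p}/(\Norm{b}{BMO}(w)_{A_p})$; the function $e^{\Re(bz)}w = e^{rb_\theta}w$ is then a genuine positive weight.

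The key steps are as follows. First, expand the $A_p$ characteristic of $v := e^{rb}w$ over a cube $Q$ as
\begin{equation*}
  \Big(\fint_Q e^{rb}w\Big)\Big(\fint_Q (e^{rb}w)^{1-p'}\Big)^{p-1}
  = \Big(\fint_Q e^{rb}w\Big)\Big(\fint_Q e^{-rb'}\sigma\Big)^{p-1},
\end{equation*}
where $b' := (p'-1)b$ satisfies $\Norm{b'}{BMO}=(p'-1)\Norm{b}{BMO}$ and $\sigma = w^{1-p'}$. Second, subtract the average $\ave{b}_Q$ inside each exponential (the factors $e^{r\ave{b}_Q}$ and $e^{-r(p'-1)\ave{b}_Q}$ combine to $e^{-r(p'-2-\text{\dots})\ave{b}_Q}$ — more precisely they do not exactly cancel unless $p=2$, so one keeps track of a residual factor $e^{cr\ave{b}_Q}$; but this residual is handled by noting it is bounded by a product of the two oscillation integrals via H\"older, or alternatively by observing that the full product is translation-invariant under $b\mapsto b+\text{const}$, so we may assume $\ave{b}_Q=0$ for the fixed $Q$). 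Thus we are reduced to estimating $\fint_Q e^{r(b-\ave{b}_Q)}w$ and $\fint_Q e^{-r(p'-1)(b-\ave{b}_Q)}\sigma$. Third, apply H\"older's inequality with a small exponent: for $s>1$ close to $1$,
\begin{equation*}
  \fint_Q e^{r(b-\ave{b}_Q)}w
  \leq \Big(\fint_Q e^{rs'(b-\ave{b}_Q)}\Big)^{1/s'}\Big(\fint_Q w^s\Big)^{1/s}.
\end{equation*}
The first factor is bounded by a dimensional constant provided $rs'\Norm{b}{BMO}$ is smaller than the John--Nirenberg threshold $c_n$ — this is where the hypothesis $r\lesssim 1/\Norm{b}{BMO}$ enters, with a fixed choice such as $s'=2$. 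The second factor, $\big(\fint_Q w^s\big)^{1/s}$, is controlled by $C\fint_Q w$ via the quantitative reverse H\"older inequality for $A_\infty$ weights from \cite{HytPer}, valid for $1<s\leq 1+c_n/[w]_{A_\infty}$; taking $s = 1 + c_n/(w)_{A_p}$ (so that the same $s$ works simultaneously for $w$ and for $\sigma$, since $(w)_{A_p}$ dominates both $[w]_{A_\infty}$ and $[\sigma]_{A_\infty}$) makes both the weight-oscillation factors comparable to $\fint_Q w$ and $\fint_Q\sigma$ respectively. The symmetric estimate for the $\sigma$-integral gives the same control. Multiplying the two bounds reconstitutes $\big(\fint_Q w\big)\big(\fint_Q\sigma\big)^{p-1}\leq[w]_{A_p}$, up to the dimensional and $p$-dependent constants accumulated along the way.

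The only point requiring care — and the main obstacle — is the coupling of the two small exponents: we need one value of $s$ that simultaneously lies below the reverse H\"older threshold for both $w$ and $\sigma$, and we need the corresponding $s' = s/(s-1)$ to be large enough that $rs'\Norm{b}{BMO}$ still falls below the John--Nirenberg threshold. Since $s' \sim (w)_{A_p}/c_n$ grows with $(w)_{A_p}$, the constraint $rs'\Norm{b}{BMO}\leq c_n$ forces exactly the stated bound $\abs{z}=r\leq\eps_{n,p}/(\Norm{b}{BMO}(w)_{A_p})$ with $\eps_{n,p}$ absorbing the product of dimensional constants. Once this bookkeeping is set up correctly, the rest is the routine chaining of H\"older and reverse H\"older described above, and the resulting constant $c_{n,p}$ depends only on $n$ and $p$ (through $p'$ and the reverse H\"older and John--Nirenberg constants), as claimed.
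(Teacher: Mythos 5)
Your proposal follows the same route as the paper: split the $A_p$ characteristic of $e^{\Re(bz)}w$ via H\"older with a small exponent, control the weight pieces by the sharp reverse H\"older inequality from \cite{HytPer} and the exponential pieces by a quantitative John--Nirenberg estimate, and observe that the coupling between the reverse H\"older threshold $s\leq 1+c_n/(w)_{A_p}$ and the John--Nirenberg threshold $rs'\Norm{b}{BMO}\leq c_n$ is exactly what forces $\abs{z}\lesssim 1/(\Norm{b}{BMO}(w)_{A_p})$. Two small points are worth correcting. First, the parenthetical claim that the averaged exponentials ``do not exactly cancel unless $p=2$'' is wrong: after raising the dual-weight factor to the power $p-1$, the combined normalisation is $e^{r\ave{b}_Q}\cdot\big(e^{-r(p'-1)\ave{b}_Q}\big)^{p-1}=e^{r\ave{b}_Q(1-(p'-1)(p-1))}=1$, since $(p-1)(p'-1)=1$; the cancellation is exact for every $p$, which is precisely how the paper proceeds. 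Your fallback via translation-invariance of the $A_p$ characteristic under $b\mapsto b+\mathrm{const}$ is also correct and gives the same reduction, so nothing breaks, but the hedging is unnecessary. Second, the remark ``with a fixed choice such as $s'=2$'' contradicts, and is superseded by, your own (correct) closing analysis: $s'$ cannot be a fixed constant, since $s$ must lie below the reverse H\"older threshold $1+c_n/(w)_{A_p}$, forcing $s'\sim(w)_{A_p}$; that is what produces the $(w)_{A_p}$ in the denominator of the admissible radius. With those two slips cleaned up, the argument is the paper's proof.
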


\begin{proof}
We recall that if $q\leq 1+\eps_n/[w]_{A_\infty}$, then $w$ satisfies the reverse H\"older inequality (cf. \cite[Theorem 2.3]{HytPer})
\begin{equation}\label{eq:RHI}
  \Big( \fint_Q w^q\Big)^{1/q}\leq 2\fint_Q w.
\end{equation}
Also, if $\Norm{b}{\BMO}\leq \eps_n$, then a version of the John--Nirenberg inequality says that
\begin{equation}\label{eq:JNI}
  \fint_Q e^{\abs{b-\ave{b}_Q}}\leq 2.
\end{equation}

Let $\sigma=w^{1-p'}$ be the dual weight and choose $q=1+\eps_n/(w)_{A_p}$. Then
\begin{equation*}
\begin{split}
  &\Big(\fint_Q e^{\Re (bz)}w\Big)\Big(\fint_Q (e^{\Re (bz)}w)^{1-p'}\Big)^{p-1} \\
  &\leq \Big( \fint_Q w^q\Big)^{1/q} \Big( \fint_Q e^{q'\Re (bz)}\Big)^{1/q'}
    \Big( \fint_Q \sigma^q\Big)^{(p-1)/q} \Big( \fint_Q e^{q'\Re (bz)(1-p')}\Big)^{(p-1)/q'},
\end{split}
\end{equation*}
where, by the reverse H\"older inequality \eqref{eq:RHI} for both $w$ and $\sigma$,
\begin{equation*}
\begin{split}
  \Big( \fint_Q w^q\Big)^{1/q} \Big( \fint_Q \sigma^q\Big)^{(p-1)/q}
  \leq \Big(2\fint_Q w\Big)\Big( 2\fint_Q\sigma\Big)^{p-1}
  \leq 2^p[w]_{A_p}
\end{split}
\end{equation*}
and, multiplying and dividing by $e^{\ave{\Re(bz)}_Q}$,
\begin{equation*}
\begin{split}
  &\Big( \fint_Q e^{q'\Re (bz)}\Big)^{1/q'}\Big( \fint_Q e^{q'\Re (bz)(1-p')}\Big)^{(p-1)/q'} \\
  &= \Big( \fint_Q e^{q'(\Re (bz)-\ave{\Re (bz)}_Q)}\Big)^{1/q'}
      \Big( \fint_Q e^{q'(\Re (bz)-\ave{\Re (bz)}_Q)(1-p')}\Big)^{(p-1)/q'}  \\
   &   =:A^{1/q'}B^{(p-1)/q'}.
\end{split}
\end{equation*}
If $\abs{z}\leq \eps_n/(q'\Norm{b}{BMO})$, then $A\leq 2$, and if
\begin{equation*}
   \abs{z}\leq \eps_n/(q'\Norm{b}{BMO}(p'-1))=\eps_n(p-1)/(q'\Norm{b}{BMO}), 
\end{equation*}
then $B\leq 2$.
Thus, if $\abs{z}\leq \eps_n\min(1,p-1)/(q'\Norm{b}{BMO})$, then
\begin{equation*}
  A^{1/q'}B^{(p-1)/q'}\leq 2^{p/q'}\leq 2^p.
\end{equation*}
Altogether, recalling also the choice of $q=1+\eps_n/(w)_{A_p}$, so that $q'=1+\eps_n^{-1}(w)_{A_p}$, this shows that
\begin{equation*}
  \fint_Q e^{\Re (bz)}w\Big(\fint_Q (e^{\Re (bz)}w)^{1-p'}\Big)^{p-1}
  \leq 4^p[w]_{A_p}\quad\text{for }
  \abs{z}\leq\frac{\eps_n'\min(1,p-1)}{(w)_{A_p}\Norm{b}{BMO}},
\end{equation*}
and taking the supremum over $Q$ completes the proof.
\end{proof}

\section{New proof of Theorem \ref{thm:HolWick}}

\begin{proof}
For convenience, we write $k+1$ instead of $k$, so that $k\geq 1$.
Denoting
\begin{equation*}
   \tilde T:=C^1_b(T),\qquad
   F(z):=e^{bz}\tilde T e^{-bz},
\end{equation*}
we begin by observing (as in \cite[p. 621]{CRW}) that
\begin{equation*}
  C^{k+1}_b(T)
  =C^k_b(\tilde T)
  =F^{(k)}(0)
  =\frac{k !}{2\pi i}\oint \frac{F(z)\ud z}{z^{k+1}},
\end{equation*}
where the integral is over any closed path around the origin.
Thus
\begin{equation*}
\begin{split}
 & \Norm{C^k_b(\tilde T)}{L^p(\mu)\to L^p(\lambda)} \\
  &\leq\frac{k !}{2\pi }\oint_{\abs{z}=\delta} \Norm{e^{bz}\tilde T e^{-bz}}{L^p(\mu)\to L^p(\lambda)}\frac{\abs{\ud z}}{\abs{z}^{k+1}} \\
  &=\frac{k !}{2\pi }\oint_{\abs{z}=\delta} \Norm{\tilde T}{L^p(e^{\Re(bz)/p}\mu)\to L^p(e^{\Re(bz)/p}\lambda)}\frac{\abs{\ud z}}{\delta^{k+1}} \\
  &\leq \frac{k !}{2\pi }\oint_{\abs{z}=\delta}
    C_{n,p,T}([e^{\Re(bz)/p}\mu]_{A_p},[e^{\Re(bz)/p}\lambda]_{A_p}) \Norm{b}{BMO(\nu)}
    \frac{\abs{\ud z}}{\delta^{k+1}},
\end{split}
\end{equation*}
where we applied Theorem \ref{thm:HolLW} in the last step, observing that
\begin{equation*}
  \Big(\frac{e^{\Re(bz)/p}\mu}{e^{\Re(bz)/p}\lambda}\Big)^{1/p}
  =\Big(\frac{\mu}{\lambda}\Big)^{1/p}=\nu,
\end{equation*}
independently of $z$. 

By Lemma \ref{lem:ebw}, if
\begin{equation*}
  \delta=\frac{\eps_{n,p}}{\max\{(\mu)_{A_p},(\lambda)_{A_p}\}\Norm{b}{BMO}},
\end{equation*}
then
\begin{equation*}
  [e^{\Re(bz)/p}w]_{A_p}
  \leq c_{n,p} [w]_{A_p}\qquad w\in\{\mu,\lambda\}.
\end{equation*}
and the monotonicity of $C_{n,p,T}$ implies that
\begin{equation*}
\begin{split}
  C_{n,p,T}([e^{\Re(bz)/p}\mu]_{A_p},[e^{\Re(bz)/p}\lambda]_{A_p})
  &\leq C_{n,p,T}(c_{n,p}[\mu]_{A_p},c_{n,p}[\lambda]_{A_p}) \\
  &=:C_{n,p,T}'([\mu]_{A_p},[\lambda]_{A_p})
\end{split}
\end{equation*}

Substituting back, this gives
\begin{equation*}
\begin{split}
  & \Norm{C^k_b(T)}{L^p(\mu)\to L^p(\lambda)} \\
  &\leq \frac{k !}{2\pi }\oint_{\abs{z}=\delta}
    C_{n,p,T}'([\mu]_{A_p},[\lambda]_{A_p}) \Norm{b}{BMO(\nu)}
    \frac{\abs{\ud z}}{\delta^{k+1}} \\
 &=k !\cdot  C_{n,p,T}'([\mu]_{A_p},[\lambda]_{A_p}) \Norm{b}{BMO(\nu)}\frac{1}{\delta^k} \\
 &\leq C_{n,p,k,T}([\mu]_{A_p},[\lambda]_{A_p}) \Norm{b}{BMO(\nu)} \Norm{b}{BMO}^k,
\end{split}
\end{equation*}
where the chosen value of $\delta$ was substituted in the last step, hiding all admissible constants into the definition of $C_{n,p,k,T}$.
\end{proof}

The proof above shows a clear separation of the use of the two assumptions $b\in BMO(\nu)$ and $b\in BMO$ of Theorem \ref{thm:HolWick}: the former is only used for Theorem \ref{thm:HolLW} and the latter for bootstrapping this to the higher order case.

\bibliography{weighted}

\begin{thebibliography}{1}

\bibitem{Bloom:85}
S.~Bloom.
\newblock A commutator theorem and weighted {BMO}.
\newblock {\em Trans. Amer. Math. Soc.}, 292(1):103--122, 1985.

\bibitem{CRW}
R.~R. Coifman, R.~Rochberg, and G.~Weiss.
\newblock Factorization theorems for {H}ardy spaces in several variables.
\newblock {\em Ann. of Math. (2)}, 103(3):611--635, 1976.

\bibitem{HolLW}
I.~{Holmes}, M.~T. {Lacey}, and B.~D. {Wick}.
\newblock {Commutators in the Two-Weight Setting}.
\newblock {\em ArXiv e-prints}, June 2015.

\bibitem{HolLW:Bloom}
I.~Holmes, M.~T. Lacey, and B.~D. Wick.
\newblock Bloom's inequality: commutators in a two-weight setting.
\newblock {\em Arch. Math. (Basel)}, 106(1):53--63, 2016.

\bibitem{HolWick}
I.~{Holmes} and B.~D. {Wick}.
\newblock {Two Weight Inequalities for Iterated Commutators with
  Calder\'on-Zygmund Operators}.
\newblock {\em ArXiv e-prints}, Sept. 2015.

\bibitem{Hytonen:A2}
T.~P. Hyt{\"o}nen.
\newblock The sharp weighted bound for general {C}alder\'on-{Z}ygmund
  operators.
\newblock {\em Ann. of Math. (2)}, 175(3):1473--1506, 2012.

\bibitem{HytPer}
T.~P. Hyt{\"o}nen and C.~P{\'e}rez.
\newblock Sharp weighted bounds involving {$A_\infty$}.
\newblock {\em Anal. PDE}, 6(4):777--818, 2013.

\end{thebibliography}
\bibliographystyle{abbrv}

\end{document}